\newtheorem{defi}{Definition}[section]
\newtheorem{theorem}[defi]{Theorem}
\newtheorem{cor}[defi]{Corollary}
\journal{Commun Nonlinear Sci Numer Simulat}
\begin{document}

\begin{frontmatter}

\author[1]{Mehdi Nadjafikhah\corref{cor1}}
\ead{m\underline{ }nadjafikhah@iust.ac.ir}
\author[2]{Mehdi Jafari}
\ead{m.jafari@phd.pnu.ac.ir}

\cortext[cor1]{Corresponding author. Tel.:+98 21 73225426; Fax: +98 21 73228426}

\address[1]{School of Mathematics, Iran University of Science and Technology, Narmak, Tehran 1684613114, Iran}
\address[2]{Department of Complementary Education, Payame Noor University, PO BOX 19395-3697, Tehran, Iran}

\title{Computation of Partially Invariant Solutions for the Einstein Walker Manifolds' Identifying Equations}

\begin{abstract}
In this paper, partially invariant solutions (PISs) method is
applied in order to obtain new four-dimensional Einstein Walker
manifolds. This method is based on subgroup classification for
the symmetry group of partial differential equations (PDEs) and
can be regarded as the generalization of the similarity reduction
method. For this purpose, those cases of PISs which have the
defect structure $\delta=1$ and are resulted from two-dimensional
subalgebras are considered in the present paper. Also it is shown
that the obtained PISs are distinct from the invariant solutions that
obtained by similarity reduction method.
\end{abstract}

\begin{keyword}
Einstein Walker manifolds \sep Lie symmetry group \sep Optimal system of Lie
 subalgebras \sep Partially invariant solutions (PISs)

\MSC 70G65 \sep 34C14 \sep 53C50

\end{keyword}

\end{frontmatter}


\section{Introduction}
\label{}

The idea of analyzing the differential equations by applying the
transformation groups implied a new theory: the symmetry group
theory, also called Lie group analysis. This method was originated
at the end of nineteenth century from the pioneering work of
Sophus Lie \cite{Lie}. Since that time, several books have been
dedicated to this concept and its generalizations [2-5]. Classification of the group invariant
solutions and reduction of the original system can be regarded as
significant applications of the Lie group method in the theory of
differential equations. The fact that symmetry reductions for
many PDEs can not be obtained via the classical symmetry method,
motivated the creation of  several generalizations of the
classical Lie group method for symmetry reductions. Consequently,
several alternative reduction methods have been proposed, going
beyond Lie's classical procedure and providing further solutions.
Partially invariant solutions (PISs) method is one of these
procedures. This algorithmic method is a powerful tool for the
reduction of PDEs and is based on classifying the subgroups of
the symmetry group. The notion of PISs  was first developed by
Ovsiannikov \cite{Ovs} and can be considered as the extension of
invariant solutions. The algorithm of constructing PISs is
similar to that of invariant solutions. Indeed, obtaining the
invariant solutions by applying the PISs method is  easier than
similarity reduction method whenever we deal with low-dimensional
groups. One significant concept which appears while constructing
PISs is the defect structure. This quantity is determined by the
dimension of
orbits and is denoted by $\delta$.\\
$~~$ In this paper,  the PISs method will be applied in order to
construct some new classes of four-dimensional Einstein Walker
manifolds. Those Manifolds which admit null parallel distributions
are called {\it Walker manifolds}. A Walker manifold is called
{\it Einstein Walker manifold} if  its Ricci tensor is a scaler
multiple of the metric at each point. Four-dimensional Einstein Walker
manifolds form the underling structure of many geometric and physical models such as; hh-space in general relativity, pp-wave  model and other areas [6-12].

 The general form of the metric tensor of four-dimensional walker manifolds is expressed as
follows \cite{Gar}:
\begin{eqnarray}
 \begin{array}{lclcl}\label{1}
 g_{a,b,c}:=2(dx\circ dy+dt\circ  dz)+a(x,t,y,z)dy\circ dy\vspace*{1mm}\\
 \hspace{13mm}+b(x,t,y,z)dz\circ dz+2c(x,t,y,z)dy\circ dz,
 \end{array}
 \end{eqnarray}
where $a$, $b$ and $c$ are smooth functions with respect to
$(x,t,y,z)$. Let $M_{a,b,c}:=(\mathcal{O},g_{a,b,c})$, where $\mathcal{O}$ be an open subset of $\mathbb{R}^4$. We can see that $M_{a,b,c}$ is Einstein if and only if the functions $a$, $b$ and $c$ verify the  following system of PDEs (\cite{Gar}, page 81).
\begin{eqnarray}\label{a7}
\begin{array}{lclclc}
a_{11}-b_{22}=0,\hspace{12mm} b_{12}+c_{11}=0,\hspace{12mm}a_{12}+c_{22}=0,\\
a_{1}c_{2}+a_{2}b_{2}-a_{2}c_{1}-{c_{2}}^{2}+2c_{}a_
{12}+b_{}a_{22}-2a_{24}-a_{}c_{12}+2c_{23}=0,\\
a_{2}b_{1}-c_{1}c_{2}+c_{}a_{11}-a_{14}-b_{23}-a_{}c_{11}-c_{}c_{12}+c_{13}-b_{}c_{22}+c_{24}=0,\\
a_{1}b_{1}-b_{1}c_{2}+b_{2}c_{1}-{c_{1}}^{2}+a_{}b_{11}+2c_{}b_{12}-2b_{13}-b_{}c_{12}+2c_{14}=0,
 \end{array}
  \end{eqnarray}
 where the index 1, 2, 3 and 4 for functions $a$, $b$ and $c$ represent
 the derivatives of these functions with respect to $x$, $t$, $y$ and $z$, respectively.
The system (\ref{a7}) is hard to handle, so we consider a spacial case in this paper;
 where $a$, $b$ and $c$ only depend on $x$ and $t$. Therefore the following system must be solved.
\begin{eqnarray}\label{2}
\begin{array}{lclclc}
a_{11}-b_{22}=0,\hspace*{7mm} b_{12}+c_{11}=0,\hspace*{7mm}a_{12}+c_{22}=0,\\
a_{1}c_{2}+a_{2}b_{2}-a_{2}c_{1}-{c_{2}}^{2}+2c_{}a_
{12}+b_{}a_{22}-a_{}c_{12}=0,\\

a_{2}b_{1}-c_{1}c_{2}+c_{}a_{11}-a_{}c_{11}-c_{}c_{12}-b_{}c_{22}=0,
\\
a_{1}b_{1}-b_{1}c_{2}+b_{2}c_{1}-{c_{1}}^{2}+a_{}b_{11}+2c_{}b_{12}-b_{}c_{12}=0.
 \end{array}
 \end{eqnarray}
In system (\ref{2}), $x$ and $t$ are independent and $a$, $b$ and $c$ are dependent variables. It is worthwhile to say that some other special cases have been considered in many references which yield some results about the structures admitted by these manifolds [14-16]. \\
$~~$ In \cite{Nad}, we have comprehensively analyzed the problem
of symmetries of the system (\ref{2}). By applying the basic Lie
symmetry method, we have obtained the classical Lie point
symmetry operators of the system (\ref{2}) and proved the following
result (refer to \cite{Nad} for more details):
\begin{cor}\label{1.1}
The Lie group of point symmetries of the PDE system (\ref{2}) has a
seven-dimensional Lie subalgebra generated by the following vector
fields:
 \begin{eqnarray}\label{3}
\begin{array}{lclclclclc}
X_1=\partial_x,&& X_2=\partial_t,&& X_3=x\partial_x-2b\partial_b-c\partial_c,\\
 X_4=x\partial_t+2c\partial_b+a\partial_c,&& X_5=t\partial_x+2c\partial_a+b\partial_c,&& X_6=t\partial_t+2b\partial_b+c\partial_c,\\X_7=a\partial_a+b\partial_b+c\partial_c,
\end{array}
\end{eqnarray}
($\partial_x\equiv\frac{\partial}{\partial x}$,...).
\end{cor}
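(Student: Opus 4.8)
The plan is to apply Lie's classical infinitesimal symmetry algorithm to the system (\ref{2}) and then to solve the resulting determining equations. I write the most general point-symmetry generator on the space with independent variables $x,t$ and dependent variables $a,b,c$ as
\begin{eqnarray*}
V=\xi\,\partial_x+\tau\,\partial_t+\phi\,\partial_a+\psi\,\partial_b+\eta\,\partial_c,
\end{eqnarray*}
where the five infinitesimals $\xi,\tau,\phi,\psi,\eta$ are unknown smooth functions of $(x,t,a,b,c)$. Because the highest-order derivatives appearing in (\ref{2}) are of second order, the relevant object is the second prolongation $\mathrm{pr}^{(2)}V$, obtained from the standard prolongation formula, whose extra coefficients $\phi^{x},\phi^{t},\phi^{xx},\ldots$ are fixed by total differentiation of $\xi,\tau,\phi,\psi,\eta$.

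Next I would impose the infinitesimal invariance criterion: for each of the six equations $\Delta_k=0$ in (\ref{2}) one requires $\mathrm{pr}^{(2)}V(\Delta_k)=0$ whenever $\Delta_1=\cdots=\Delta_6=0$. In practice I substitute the second-order relations $a_{11}=b_{22}$, $c_{11}=-b_{12}$, $c_{22}=-a_{12}$ (and their differential consequences) to eliminate the constrained jet coordinates, and then treat the remaining derivatives of $a,b,c$ as independent coordinates on the solution manifold. Collecting the coefficients of each distinct monomial in these free derivatives makes the invariance conditions split into a large overdetermined linear system of PDEs for $\xi,\tau,\phi,\psi,\eta$ — the determining equations.

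The main obstacle is the organization and solution of this determining system rather than any single conceptual point: the six prolonged equations expand into a great many terms, so I would carry out the bookkeeping with a computer-algebra package. Solving it, I expect $\xi,\tau$ to come out affine in $x,t$ and independent of $a,b,c$, while the response functions $\phi,\psi,\eta$ depend linearly and homogeneously on $a,b,c$ with coefficients tied to $\xi,\tau$; integrating the resulting constraints should leave exactly seven free constants. Reading off the generators attached to these constants should reproduce $X_1,\dots,X_7$ of (\ref{3}).

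Finally I would confirm the stated structure by two checks. A quick sanity check is to substitute each candidate $X_i$ into $\mathrm{pr}^{(2)}X_i(\Delta_k)$ and verify that it vanishes modulo (\ref{2}); linear independence of the seven fields over $\mathbb{R}$ is immediate from their differing coefficient patterns. To see that they span a genuine seven-dimensional Lie subalgebra I would compute all the commutators $[X_i,X_j]$ and verify that each lies in the span of $X_1,\dots,X_7$, so the set is closed under the Lie bracket and the dimension count is confirmed.
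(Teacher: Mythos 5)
Your proposal is the standard Lie infinitesimal symmetry algorithm (second prolongation, invariance criterion on the solution manifold, determining equations, closure check), which is exactly the route the paper takes — it states that the result was obtained ``by applying the basic Lie symmetry method'' in the companion reference \cite{Nad}, to which the proof is deferred. The approach is correct and matches the paper's; the only caveat is that, as written, it is a plan rather than a completed computation, so the seven generators are anticipated rather than derived.
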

Mainly, we have constructed an optimal system of one-dimensional
subalgebras in \cite{Nad} which provides the preliminary
classification of group invariant solutions for the system (\ref{2}).
Also, we have obtained the corresponding invariant solutions of
this system via the method of similarity reduction.\\
It is worth considering that some of the partially invariant
solutions are not invariant with respect to the subalgebras of
lower dimensions. They are known as non-reducible PISs. By
determining non-reducible  PISs for the system (\ref{2}), we can obtain
some new
four-dimensional Einstein Walker manifolds.\\
This paper is organized as follows: In section 2, we recall the
general procedure of determining PISs. In section 3, we classify
the two-dimensional subalgebras of the symmetry Lie algebra and
construct an optimal system. Section 4 is devoted to the
computation of some of the PISs associated to the system (\ref{2}). In
section 5, the relation between PISs and invariant solutions is
investigated and the condition for obtaining the non-reducible
PISs is stated. Meanwhile, some non-reducible PISs for the system
(\ref{2}) are presented. Some concluding remarks are declared at the end
of the paper.
\section{Partially invariant solutions method}
In this section, we recall the general procedure for determining
PISs for an arbitrary system of PDEs. First, we present a brief
review of the concept of PISs  \cite{Melesh,Ovs}. Consider a
system of PDEs of $n$th order  with $p$ independent variables
$(x=(x^{i})\in X, i=1,...,p)$ and $q$ dependent variables
($u=(u^{j})\in U, j=1,...,q$) such as:
\begin{eqnarray}\label{4}
\Delta=\Delta_{\mu}(x,u^{(n)})=0,    \ \ \ \ \ \ \ \ \          \mu=1,...,r.
\end{eqnarray}
Let $G$ be a local symmetry group of the above system which acts
on the total space $X\times U$ with $r$-dimensional orbits. If
$u=f(x)$ is a solution of the system (\ref{4}) with graph $\Gamma_f$,
then the orbit space of $\Gamma_f$ can be defined as follows:
\begin{eqnarray}\label{5}
G\Gamma_f=\{g.(x,u)\mid (x,u)\in \Gamma_f, g\in G\}
\end{eqnarray}
which is the union of the orbits of the $\Gamma_f$-elements.\\
 The
defect structure of the solution $u=f(x)$ with respect to the
group $G$ is computed by the matrix of generators'
characteristics and is defined as:
\begin{eqnarray}\label{6}
\delta=\mathrm{dim}(G\Gamma_f)-\mathrm{dim}(\Gamma_f)=\mathrm{dim}(G\Gamma_f)-p.
\end{eqnarray} Also, we have $0\leq\delta\leq
\mathrm{min}\{r,q\}$ (\cite{Ovs}, p 276-277).\\
 If $\delta =0$,
then $u=f(x)$  is an invariant solution and if $0<\delta<
\mathrm{min}\{r,q\}$, then $u=f(x)$ is a partially invariant
solution.

In order to calculate the PISs, first of all, it is necessary to
classify the symmetry group into conjugacy classes. For obtaining
the PISs with the defect structure $\delta$, those subgroups
$H\subset G$ which have the property that if the dimension of the
 orbits of $H$ on the space $X\times U$ is $r$, then the dimension of
the orbits is $r-\delta$ on the space $X$,  must be selected
\cite{Grun}. Let $H$ be a subgroup with this property that
mentioned above and $\mathfrak{h}$ be its Lie algebra with
infinitesimal generators $\{v_1,...,v_s\}$. Hence, we can obtain a
complete set of functionally independent invariants of the form
\begin{eqnarray}\label{7}
\{\xi_i(x),I_j(x,u)\},
\end{eqnarray}
where $i=1,...,p+\delta-s$ and $j=1,...,q-\delta$.\\
 Then we have
\begin{eqnarray}\label{8}
\mathrm{rank}\left(\frac{\partial I_j(x,u)}{\partial u}\right)
=q-\delta=q'.
\end{eqnarray}
If $u=f(x)$ is a function, then the manifold $H\Gamma_f$ can be
expressed in terms of the invariants (\ref{7}). So, we have:
\begin{eqnarray}\label{9}
I_j(x,u)=f_j(\xi_i(x))
\end{eqnarray}
where the functions $f_j$ are arbitrary. Now, by applying the
implicit function theorem, we conclude that
\begin{eqnarray}\label{10}
u^{i_\alpha}=U^{i_\alpha}(x,u^{j\beta},f_j(\xi_i(x)))
\end{eqnarray}
where $\alpha=1,...,q'$ and $\beta=1,...,\delta$. The remaining
dependent variables only depend on the original independent
variables:
\begin{eqnarray}\label{11}
u^{j_\beta}=U^{j_\beta}(x_1,...,x_p), \ \ \ \ \ \ \ \ \beta=1,...,\delta.
\end{eqnarray}
Now, the derivatives of the functions $u^1,...,u^q$ with respect
to the new variables which are obtained from equations (\ref{10}) and (\ref{11})
must be calculated. Hence, by  substituting these quantities into
the original system,  a new system is obtained, involving the $q'$
functions $f_j$ and the invariants $\xi_i$. The resulted equations
are not generally consistent, so  that the compatibility
conditions must be computed. Consequently, a system of PDEs is
deduced from these constraints which is denoted by $\Delta/H$. On
the other hand, a system of PDEs is resulted from (\ref{11}) denoted by
$\Delta'$. Now, the system  $\Delta/H$ must be solved first. Then
corresponding to each of the solution of this system, the system
$\Delta'$ will be solved. Finally, the partially invariant
solutions are obtained by substituting the resulted solutions
into equations (\ref{10}) and (\ref{11}).  For more details about the methods of
determining the PISs refer to \cite{Grun2,Ovs}.
\section{Classification of Subalgebras for the system (\ref{2})}
In this section, we want to classify the subgroups of symmetry
group of the system (\ref{2}), into conjugacy classes. Searching for the invariant solutions can be regarded as the main
motivation of computing the symmetries of a differential
equation. As it is well known, the problem of classifying
invariant solutions is equivalent to the problem of classifying
the subgroups of the full symmetry group under conjugation. Let
$H$ and $\tilde{H}$ be two connected, $s$-dimensional Lie
subgroups of the Lie group $G$ with corresponding Lie subalgebras
${\mathfrak h}$ and $\tilde{\mathfrak h}$ of the Lie algebra
${\mathfrak g}$ of $G$. Let $g\in G$, then $\tilde{H}=gHg^{-1}$
are conjugate subgroups if and only if $\tilde{\mathfrak h}={\rm
Ad}(g)\cdot{\mathfrak h}$ are conjugate subalgebras, where ${\rm Ad}(g)$
is adjoint represen Hence, the
problem of determining  an optimal system of subgroups is
equivalent to that of obtaining an optimal system of subalgebras,
and so we focus on it \cite{Olv,Ovs}. The latter problem, tends to
obtain a list of conjugacy inequivalent subalgebras with the
property that any other subalgebra is equivalent to only a unique
member of the list under some element of the adjoint
representation for some element of the investigated Lie group.

\subsection{Optimal system of one-dimensional subalgebras for the
system (\ref{2})}
Indeed, for one-dimensional subalgebras, the classification
problem is necessarily the same as the problem of classifying the
orbits of the adjoint representation. Thus, an optimal set of
subalgebras is constructed if we select just one representative
from each family of equivalent subalgebras. Consequently, the
associated set of invariant solutions is then the minimal list
from which we can obtain all other invariant solutions of
one-dimensional subalgebras simply via transformations.

In \cite{Nad}, we have presented a comprehensive analysis of this
problem and have constructed an optimal system of one-dimensional
subalgebras for the system (\ref{2}) as follows :
\begin{theorem}\label{3.1}
An optimal system of one-dimensional Lie subalgebras of the
system (\ref{2}) is provided by the following generators:
\begin{eqnarray}\label{12}
\begin{array}{lclcl}
1)\ {\rm{\bf  X}}^1=X_7,&&\hspace*{2mm}8)\ {\rm{\bf  X}}^{8}\hspace{1.5mm}=X_4+aX_5+bX_6+cX_7,\vspace*{.5mm}\\
2)\ {\rm{\bf  X}}^2=X_1+aX_7,&&\hspace*{2mm}9)\ {\rm{\bf  X}}^{9}\hspace{1.5mm}=\varepsilon X_1+X_4+aX_5+bX_6+cX_7,\vspace*{.5mm}\\
3)\ {\rm{\bf  X}}^3=X_2+aX_7,&&10)\ {\rm{\bf  X}}^{10}=X_3+aX_5+bX_6+cX_7,\vspace*{.5mm}\\
4)\ {\rm{\bf  X}}^4=X_6+aX_7\,&&11)\ {\rm{\bf  X}}^{11}=\varepsilon X_2+X_3+aX_5+bX_6+cX_7,\vspace*{.5mm}\\
5)\ {\rm{\bf  X}}^5=\varepsilon X_1+X_6+aX_7,&&12)\ {\rm{\bf  X}}^{12}=X_3+\varepsilon X_4+aX_5+bX_6+cX_7,\vspace*{.5mm}\\
6)\ {\rm{\bf  X}}^6=X_5+aX_6+bX_7,&&13)\ {\rm{\bf  X}}^{13}=\varepsilon X_2+X_3+\varepsilon' X_4+aX_5+bX_6+cX_7.\vspace*{.5mm}\\
7)\ {\rm{\bf  X}}^7=\varepsilon X_2+X_5+aX_6+bX_7,
\end{array}
\end{eqnarray}
where $\varepsilon$ and $\varepsilon'$ are $\pm 1$  and $a,b,c\in{\Bbb R}$ are arbitrary numbers \mbox{\cite{Nad}}.
\end{theorem}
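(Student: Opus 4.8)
The plan is to construct the optimal system of one-dimensional subalgebras by computing the adjoint action of the symmetry group on its seven-dimensional Lie algebra $\mathfrak{g}$ and then systematically reducing a general element to one of the listed canonical forms. First I would compute the commutator table of the vector fields $X_1,\dots,X_7$ given in Corollary~\ref{1.1}; from these structure constants the adjoint representation $\mathrm{Ad}(\exp(\epsilon X_i))$ of each basis generator is obtained by exponentiating $\mathrm{ad}_{X_i}$, i.e.\ via the Lie series $\mathrm{Ad}(\exp(\epsilon X_i))X_j=X_j-\epsilon[X_i,X_j]+\tfrac{\epsilon^2}{2}[X_i,[X_i,X_j]]-\cdots$. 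Recording these seven adjoint maps explicitly is the computational backbone of the argument.

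Next I would take a generic element $X=\sum_{i=1}^{7}c_iX_i$ and use the adjoint maps as a group of simplifying transformations acting on the coefficient vector $(c_1,\dots,c_7)$. The strategy is the standard Olver-style case analysis: one identifies invariants of the adjoint action (quantities that cannot be altered, which distinguish genuinely inequivalent classes) and uses the remaining freedom to normalize the non-invariant coefficients. I would organize the reduction hierarchically, branching first on which of the "leading" coefficients (those attached to the generators that scale or mix the others, e.g.\ $X_3,X_4,X_5,X_6$) are nonzero, and within each branch rescale or translate to send coefficients to $0$, $\pm1$, or an arbitrary residual parameter. The signs $\varepsilon,\varepsilon'=\pm1$ in the list (\ref{12}) are precisely the residual discrete invariants that survive after continuous rescalings, while the arbitrary reals $a,b,c$ label continuous families that the adjoint action genuinely cannot collapse.

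The main obstacle will be the bookkeeping in the case analysis: ensuring that the thirteen representatives are \emph{exhaustive} (every element of $\mathfrak{g}$ is conjugate to at least one of them) and \emph{non-redundant} (no two representatives are conjugate to each other). Exhaustiveness is handled by verifying that the branching covers all sign and vanishing patterns of the coefficient vector; the subtler direction is inequivalence, which I would establish by exhibiting, for each pair of candidate representatives, an adjoint-invariant function that separates them, or by a dimension/isotropy argument showing no group element maps one to the other. Since this theorem is quoted from the authors' earlier work \cite{Nad}, I would present the commutator table and adjoint maps, carry out one or two representative reduction branches in detail to illustrate the mechanism, and refer to \cite{Nad} for the full enumeration rather than reproduce every branch here.
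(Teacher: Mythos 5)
Your proposal is correct and follows essentially the same route as the paper: Theorem \ref{3.1} is imported verbatim from \cite{Nad}, where it is established by exactly the Olver-style procedure you describe (commutator table, adjoint maps obtained by exponentiating $\mathrm{ad}_{X_i}$, and a hierarchical case analysis normalizing coefficients to $0$, $\pm 1$, or residual parameters), and the paper's own proof of Theorem \ref{3.2} for two-dimensional subalgebras exhibits the same mechanism. Since the paper gives no proof of Theorem \ref{3.1} beyond the citation, your plan of carrying out one or two representative reduction branches and deferring the full enumeration to \cite{Nad} matches the authors' own treatment.
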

\subsection{Optimal system of two-dimensional subalgebras for the
system (\ref{2})} In this paper, we need to classify the two-dimensional
subalgebras. Because, we want to calculate those PISs which have
the defect structure $\delta=1$ and the reduced system $\Delta/H$
is a system of ordinary differential equations. Since $p=2$,
$\delta=1$ and $p+\delta-s=1$, then we have $s=2$. So, we should
consider the two-dimensional subgroups. \\
Consequently,
constructing the two-dimensional optimal system, i.e.,
classification of the two-dimensional subalgebras of $\mathfrak{g}$
is our next step. This process is performed by choosing one of the
vector fields as stated in theorem (\ref{3.1}). Let us consider
${\rm{\bf  X}}^1$ (or ${\rm{\bf X}}^i,\ i=2,\cdots,13$).
Corresponding to it, an optional vector field
$Y=b_1X_1+\cdots+b_7X_7$ is selected, so we must have
\begin{eqnarray}\label{13}
[{\rm{\bf  X}}^1,Y]=\lambda {\rm{\bf  X}}^1+\mu Y.
\end{eqnarray}
Equation (\ref{13}) leads us to the system
\begin{eqnarray}\label{14}
C^i_{jk}\alpha_ja_k=\lambda
a_i+\mu\alpha_i\hspace{2cm}(i=1,\cdots,7).
\end{eqnarray}
where the constant coefficients $C^i_{jk}$ are the structure constants. The solutions of the system (\ref{14}), give one of the
two-dimensional generators and the second generator is ${\rm{\bf
X}}^1$ (or ${\rm{\bf X}}^i,\ i=2,\cdots,13$) if selected.\\
 After
the construction of all two-dimensional subalgebras, for every
vector fields of theorem (\ref{3.1}), they need to be simplified by the
action of adjoint transformations in the manner analogous to the
way of one-dimensional optimal system. Hence, we can state the
following theorem:

\begin{theorem}\label{3.2}
Any two-dimensional subalgebras of (\ref{3}) is conjugate to precisely
one of the following subalgebras:

\begin{eqnarray*}
\begin{array}{lclclclclcl}
\mathcal{A}^1_1 : \langle X_1,X_3+\alpha X_6+\beta X_7\rangle,&&\hspace{-3mm} \mathcal{A}^2_1 : \langle X_1,X_2+\alpha  X_5+\beta X_7\rangle,&&\hspace{-3mm}\mathcal{A}^3_1 : \langle X_1,X_5+\alpha X_7\rangle,\\
\mathcal{A}^4_1 : \langle X_1,X_3+\epsilon X_5+X_6+\alpha X_7\rangle,&&\hspace{-3mm}\mathcal{A}^5_1 : \langle X_1,X_2+\alpha X_3+\beta X_7\rangle
, &&\hspace{-3mm}\mathcal{A}^6_1 : \langle X_1,X_6+\alpha X_7\rangle,\\
\mathcal{A}^7_1 : \langle X_1,X_7\rangle,
\vspace*{3mm}\\

\mathcal{A}^1_2 : \langle X_2,X_3+\alpha X_6+\beta X_7\rangle,&&\hspace{-3mm} \mathcal{A}^2_2 : \langle X_2,X_1+\epsilon  X_4+\beta X_7\rangle,&&\hspace{-3mm}\mathcal{A}^3_2 : \langle X_2,X_4+\alpha X_7\rangle,\\
 \mathcal{A}^4_2 : \langle X_2,X_3+\epsilon X_4+X_6+\alpha X_7\rangle,&&\hspace{-3mm} \mathcal{A}^5_2 : \langle X_2,X_1+X_6+\alpha X_7\rangle,&&\hspace{-3mm}\mathcal{A}^6_2 : \langle X_2,X_6+\alpha X_7\rangle, \\
 \mathcal{A}^7_2 : \langle X_2,X_7\rangle,
\vspace*{3mm}\\

\mathcal{A}^1_3 : \langle X_6,X_3+\alpha X_7\rangle,&&\hspace{-3mm}\mathcal{A}^2_3 : \langle X_6,X_4\rangle,&&\hspace{-3mm} \mathcal{A}^3_3 : \langle X_6,X_5\rangle,\\
\mathcal{A}^4_3 : \langle X_6,X_1+\alpha X_7\rangle,&&\hspace{-3mm}\mathcal{A}^5_3 : \langle X_6,X_2\rangle,&&\hspace{-3mm}\mathcal{A}^6_3 :
\langle X_6,X_7\rangle,
\vspace*{3mm}\\

\mathcal{A}^1_4 : \langle \varepsilon X_1+X_6,X_2\rangle ,&&\hspace{-3mm}\mathcal{A}^2_4 : \langle \varepsilon X_1+X_6,X_5\rangle, &&\hspace{-3mm} \mathcal{A}^3_4 : \langle \varepsilon X_1+X_6,X_7\rangle,
\vspace*{3mm}\\

\mathcal{A}^1_5 : \langle X_5,X_3+\alpha X_6+\beta X_7\rangle ,&&\hspace{-3mm} \mathcal{A}^2_5 : \langle X_5,X_1+\alpha X_6+\beta X_7\rangle,&&\hspace{-3mm}\mathcal{A}^3_5 : \langle X_5,X_6+\alpha X_7\rangle,\\
\mathcal{A}^4_5 : \langle X_5,X_7\rangle,
\vspace*{3mm}\\

\mathcal{A}^1_6 : \langle \varepsilon X_2+X_5,X_3+\frac{1}{2}X_6+\alpha X_7\rangle ,&&\hspace{-3mm} \mathcal{A}^2_6 : \langle \varepsilon X_2+X_5,X_1+\alpha X_7\rangle,&&\hspace{-3mm}
\mathcal{A}^3_6 : \langle \varepsilon X_2+X_5,X_7\rangle,
\vspace*{3mm}\\

\mathcal{A}^1_7 : \langle X_4,X_3+\alpha X_6+\beta X_7\rangle ,&&\hspace{-3mm} \mathcal{A}^2_7 : \langle X_4,X_2+\alpha X_3+\beta X_7\rangle,&&\hspace{-3mm} \mathcal{A}^3_7 : \langle X_4,X_6+\alpha  X_7\rangle,\\
\mathcal{A}^4_7 : \langle X_4,X_7\rangle,
\vspace*{3mm}\\

\mathcal{A}^1_{8} : \langle \varepsilon X_1+X_4,X_3+2X_6+\alpha X_7\rangle ,&&\hspace{-3mm}\mathcal{A}^2_{8} : \langle \varepsilon X_1+X_4,X_2+\alpha X_7\rangle,&&\hspace{-3mm} \mathcal{A}^3_{8} : \langle \varepsilon X_1+X_4,X_7\rangle,
\vspace*{3mm}\\

\mathcal{A}^1_{9} : \langle X_3,X_2+\alpha X_7\rangle,&&\hspace{-3mm}\mathcal{A}^2_{9} : \langle X_3,X_5\rangle,&&\hspace{-3mm}\mathcal{A}^3_{9} : \langle X_3,X_1\rangle,
\\
\mathcal{A}^4_{9} : \langle X_3,X_6+\alpha X_7\rangle,&&\hspace{-3mm}
\mathcal{A}^5_{9} : \langle X_3,X_7\rangle,&&\hspace{-3mm} \mathcal{A}^6_{9} : \langle X_3,X_4\rangle,
\vspace*{3mm}\\

\mathcal{A}^1_{10} : \langle \varepsilon X_2+X_3,X_1\rangle ,&&\hspace{-3mm} \mathcal{A}^2_{10} : \langle \varepsilon X_2+X_3,X_4\rangle,&&\hspace{-3mm}\mathcal{A}^3_{10} : \langle \varepsilon X_2+X_3,X_7\rangle,\\
\mathcal{A}^4_{10} : \langle X_2,X_3+\alpha X_7\rangle,

\vspace*{3mm}\\

\mathcal{A}^1_{11} : \langle X_3+\varepsilon X_4,\varepsilon X_5+X_6-2X_7\rangle ,&&\hspace{-3mm} \mathcal{A}^2_{11} : \langle X_3+\varepsilon X_4,X_2+\alpha X_7\rangle,&&\hspace{-3mm}\mathcal{A}^3_{11} : \langle X_3+\varepsilon X_4,X_7\rangle,\\
\mathcal{A}^4_{11} : \langle X_3+\varepsilon X_4,X_3+X_6+\alpha X_7\rangle,&&\hspace{-3mm} \mathcal{A}^5_{11} : \langle X_3+\varepsilon X_4,X_1+\varepsilon X_2\rangle,
\vspace*{3mm}\\

\mathcal{A}^1_{12} : \langle \varepsilon X_2+X_3+\varepsilon' X_4,X_1+\varepsilon' X_2\rangle ,&&\hspace{-3mm}
\mathcal{A}^2_{12} : \langle \varepsilon X_2+X_3+\varepsilon' X_4,X_7\rangle,
\end{array}
\end{eqnarray*}
where $\alpha$ and $\beta$ are arbitrary constants,
$\varepsilon$ and $\varepsilon'$ are $\pm 1$ and $\epsilon$ is $\pm 1$ or $0$.
\end{theorem}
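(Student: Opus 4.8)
The plan is to carry out the scheme indicated after equation~(\ref{14}): realize each two-dimensional subalgebra as the span of an optimal one-dimensional representative together with a second generator constrained by the subalgebra (closure) condition, and then eliminate conjugate duplicates using the residual adjoint freedom. The first step is to record the commutator table of $\{X_1,\dots,X_7\}$. A direct computation of the brackets yields the nonzero relations
\[
\begin{array}{lll}
[X_1,X_3]=X_1, & [X_1,X_4]=X_2, & [X_2,X_5]=X_1,\\
[X_2,X_6]=X_2, & [X_3,X_4]=X_4, & [X_3,X_5]=-X_5,\\
[X_4,X_5]=X_3-X_6+2X_7, & [X_4,X_6]=X_4, & [X_5,X_6]=-X_5,
\end{array}
\]
with $X_7$ central; these are exactly the structure constants $C^i_{jk}$ feeding the system~(\ref{14}).

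Since every one-dimensional subspace of a Lie algebra is automatically a subalgebra, each two-dimensional subalgebra contains a one-dimensional one which, by Theorem~\ref{3.1}, can be conjugated onto one of the thirteen representatives ${\rm{\bf X}}^i$. This splits the argument into thirteen families indexed by the choice of first generator (the subscript of $\mathcal{A}^{\,k}_i$). For a fixed ${\rm{\bf X}}^i$ I set $Y=b_1X_1+\dots+b_7X_7$ and impose~(\ref{13}), $[{\rm{\bf X}}^i,Y]=\lambda{\rm{\bf X}}^i+\mu Y$, which through the structure constants becomes the linear system~(\ref{14}) in $(b_k,\lambda,\mu)$. A short case analysis on the pair $(\lambda,\mu)$ produces all admissible $Y$, and one reduces $Y$ modulo ${\rm{\bf X}}^i$ since $\langle{\rm{\bf X}}^i,Y\rangle=\langle{\rm{\bf X}}^i,Y+s{\rm{\bf X}}^i\rangle$. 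The special coefficients in the statement are forced at this stage: when $Y$ carries a basis component absent from ${\rm{\bf X}}^i$, closure requires $[{\rm{\bf X}}^i,Y]$ to be proportional to ${\rm{\bf X}}^i$, and matching the surviving components fixes the coefficient — this is precisely what yields $\tfrac12X_6$ in $\mathcal{A}^1_6$, $2X_6$ in $\mathcal{A}^1_8$, and the relation in $\mathcal{A}^1_{11}$.

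I would then normalize the remaining free parameters of $Y$ by the residual adjoint freedom, i.e.\ the conjugations stabilizing the line $\mathbb{R}\,{\rm{\bf X}}^i$. Computing $\mathrm{Ad}(\exp(\epsilon X_k))$ for the relevant generators and applying them allows me to rescale or annihilate coefficients and to normalize the discrete data to $\varepsilon,\varepsilon'\in\{\pm1\}$ and $\epsilon\in\{0,\pm1\}$, leaving the canonical representatives $\mathcal{A}^{\,k}_i$ with arbitrary constants $\alpha,\beta$.

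The hard part is not any individual bracket but proving that the resulting list is \emph{exhaustive and irredundant}. A two-dimensional subalgebra contains many one-dimensional subalgebras, so the same class can appear under several different first generators ${\rm{\bf X}}^i$; I must therefore verify that no two listed $\mathcal{A}^{\,k}_i$ are conjugate and that every case is reached exactly once. The effective tools are conjugacy invariants: the isomorphism type of the two-dimensional algebra (abelian versus the unique non-abelian type $[Z_1,Z_2]=Z_1$), the dimension of its derived algebra, and the adjoint-orbit data of its generators. Tracking these through the thirteen families, together with disciplined handling of the sign choices, is where the real care lies; the rest is the lengthy but routine solution of the systems~(\ref{14}).
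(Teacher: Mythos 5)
Your plan coincides with the paper's own proof: the authors likewise fix a representative $X$ from the one-dimensional optimal system of Theorem~\ref{3.1}, adjoin a generic $Y=b_1X_1+\cdots+b_7X_7$ subject to the closure condition (\ref{13})--(\ref{14}), and then normalize the surviving coefficients by the adjoint maps $F^{s}_i$, writing out only the case $X=X_1+aX_7$ in full and disposing of the remaining twelve ``by a similar method.'' Your commutator table is correct (and consistent with the special coefficients in $\mathcal{A}^1_6$, $\mathcal{A}^1_8$, $\mathcal{A}^1_{11}$), and your added insistence on verifying pairwise non-conjugacy of the listed representatives via invariants is the one point the paper's proof does not explicitly address.
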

\begin{proof}
Each of the two-dimensional subalgebra has two generators. For
classifying two-dimensional subalgebras, we must select one of the
generators from the list of one-dimensional optimal system (\ref{12})
and another generator must be taken optionally.
 \\
Suppose that $\mathfrak{h}=\mathrm{span}\{X,Y\}$ is a
two-dimensional subalgebra of $\mathfrak{g}$ where $X$ is a
one-dimensional subalgebra which is chosen from the list (\ref{12}) and
$Y$ is an optional vector defined by: $Y=b_1X_1+\cdots+b_7X_7$.
Now, we must simplify $\mathfrak{h}$ as much as possible by
imposing various adjoint transformations on it \cite{Olv}.\\
Each adjoint transformation is a linear map $F^s_i:\mathfrak{g}\to
\mathfrak{g}$ defined by $X\mapsto\mathrm{Ad}(\exp(sX_i).X)$, for
$i=1,\cdots,7$. Since this procedure is so lengthy,  we only
explain one of the cases in the following. \vspace*{1mm}

Case1 : If $X=X_1+aX_7$ then
\begin{eqnarray}\label{15}
\begin{array}{lclcl}
\mathfrak{h}=\langle X,Y\rangle=\langle X_1+aX_7,\sum^7_{i=1}b_iX_i\rangle\vspace*{2mm}\\
\hspace{19.3mm}=\langle
X_1,b_2X_2+b_3X_3+b_4X_4+b_5X_5+b_6X_6+b_7X_7\rangle
\end{array}
\end{eqnarray}
So, we have:
\begin{itemize}
\item[a)] If $b_2=b_3=b_4=b_5=b_6=0$, then we have $\mathfrak{h}= \langle  X_1,b_7X_7 \rangle=\langle X_1,X_7\rangle$. Since $[X_1,X_7]=0=0X_1+0X_7$, then  $\mathfrak{h}$ is closed under the Lie bracket. So, $\mathfrak{h}$ is reduced to the case  $\mathcal{A}^7_1$.
\item[b)] If $b_2=b_3=b_4=b_5=0$ and $b_6\neq 0$, then $\mathfrak{h}= \langle  X_1,b_6X_6+b_7X_7 \rangle=\langle X_1,X_6+\alpha X_7\rangle$. Since $[X_1,X_6+\alpha X_7]=0$, then  $\mathfrak{h}$ is closed under the Lie bracket. So, $\mathfrak{h}$ is reduced to the case $\mathcal{A}^6_1$.
\item[c)] If $b_2=b_3=b_4=b_6=0$ and $b_5\neq 0$, then $\mathfrak{h}= \langle  X_1,b_5X_5+b_7X_7 \rangle=\langle X_1,X_5+\alpha X_7\rangle$. Since $[X_1,X_5+\alpha X_7]=0$, then  $\mathfrak{h}$ is closed under the Lie bracket. So, $\mathfrak{h}$ is reduced to the case  $\mathcal{A}^3_1$.
\item[d)] If $b_2=b_3=b_4=0$ and $b_6\neq 0$, then we can make the coefficient of $X_5$ vanish by $F^{s_5}_5$; By setting $s_5=\frac{1}{b_6}$. So, we  have $\mathfrak{h}= \langle  X_1,b_6X_6+b_7X_7 \rangle$ which is reduced to the case  $\mathcal{A}^6_1$ like (b).
\item[e)] If $b_2=b_3=0$ and $b_4\neq 0$, then $\mathfrak{h}=\langle X_1,b_4X_4+\cdots+b_7X_7 \rangle$ and  $[X_1,b_4X_4+\cdots+b_7X_7]=b_4X_2\neq rX_1+s(b_4X_4+\cdots+b_7X_7)$ for any $r,s\in\mathbb{R}$.  So, $\mathfrak{h}$ is not closed under the Lie bracket and we have not any two-dimensional subalgebras in this case.
\item[f)] If $b_2=b_4=0$, $b_3\neq 0$ and $b_6\neq 1$, then we can make the coefficient of $X_5$ vanish by
 $F^{s_5}_5$; By setting $s_5=\frac{b_5}{-1+b_6}$. Also, by scaling if necessary,
 we can assume that $b_3=1$. Then we have  $\mathfrak{h}=\langle X_1,X_3+b_6X_6+b_7X_7 \rangle$ and $[X_1,X_3+b_6X_6+b_7X_7]=X_1$.
 So, $\mathfrak{h}$ is closed under the Lie bracket and the case $\mathcal{A}^1_1$ is concluded.
\item[g)] If $b_2=b_4=0$, $b_3\neq 0$ and $b_6=1$, then by scaling if necessary, we can assume that $b_3=1$.
 Also, the coefficient of $X_5$  can be vanished or be $\pm 1$
by $F^{s_6}_6$; By setting $s_6=-\mathrm{ln}\vert b_5 \vert$.
Then we have $\mathfrak{h}=\langle X_1,X_3+\varepsilon
X_5+X_6+b_7X_7 \rangle$ and $[X_1,X_3+\varepsilon
X_5+X_6+b_7X_7]=X_1$. So, $\mathfrak{h}$ is closed under the Lie
bracket and  the case $\mathcal{A}^4_1$ is deduced.
\item[h)] If $b_3=b_4=b_6=0$ and $b_2\neq 0$, then by scaling if necessary, we can assume that $b_2=1$. Also, we have  $\mathfrak{h}=\langle X_1,X_2+b_5X_5+b_7X_7 \rangle$ and $[X_1,X_2+b_5X_5+b_7X_7]=0$.
So, $\mathfrak{h}$ is closed under  the Lie bracket and the case
$\mathcal{A}^2_1$ is resulted.
\item[i)] If $b_4=b_6=0$, $b_2\neq 0$ and $b_3\neq 0$, then by scaling if necessary,
we can assume that $b_2=1$. Also we can make the coefficient of
$X_5$ vanish by $F^{s_5}_5$; By setting $s_5=-\frac{b_5}{b_3}$.
Then we have  $\mathfrak{h}=\langle X_1,X_2+b_3X_3+b_7X_7
\rangle$ and $[X_1,X_2+b_3X_3+b_7X_7]=b_3X_1$. So, $\mathfrak{h}$
is closed under the Lie bracket and the case $\mathcal{A}^5_1$ is
resulted .
\item[j)] If $b_4=0$, $b_6\neq 0$ and $b_3=1$, then by scaling if necessary, we can assume that $b_6=1$.
We can make the coefficient of $X_2$ vanish by $F^{s_2}_2$; By
setting $s_2=-\frac{1}{b_6}$. Also, the coefficient of $X_5$ can
be vanished or be $\pm 1$ by $F^{s_6}_6$; By setting
$s_6=-\mathrm{ln\vert b_5\vert}$. So, we have
$\mathfrak{h}=\langle X_1,X_3+\varepsilon X_5+X_6+b_7X_7 \rangle$
which is reduced to the case $\mathcal{A}^4_1$ similar to (g).
\item[k)] If $b_4=0$, $b_6\neq 0$ and $b_3\neq 1$, then we can make the coefficient of $X_2$ and $X_5$
vanish by $F^{s_2}_2$ and $F^{s_5}_5$; By setting
$s_2=-\frac{1}{b_6}$ and $s_5=-\frac{b_5}{b_3-1}$, respectively.
By scaling if necessary, we have  $\mathfrak{h}=\langle
X_1,X_3+b_6X_6+b_7X_7 \rangle$ that is reduced to the case
$\mathcal{A}^1_1$ similar to (f).
\end{itemize}
In each case, $\mathfrak{h}$  can not be simplified any more  by
$F^{s_i}_i$, $i=1,...,7$. By a similar method, we can find the
two-dimensional subalgebras for the other 11 cases.
\end{proof}

\section{ Computation of the partially invariant solutions for the system (\ref{2})}
In this section, we will calculate the PISs of the system (\ref{2}). For
example, consider the Lie subalgebra $\mathcal{A}^7_1 : \langle
X_1,X_7\rangle$.
 The set of functionally independent invariants for $\mathcal{A}^7_1$ is a set of functions $I$ with the following property: $X_1(I)=X_7(I)=0$.
By calculating these functions, a set of functionally independent
invariants is obtained as follows:
\begin{eqnarray}\label{16}
\{t,\frac{b}{a},\frac{c}{a}\}
\end{eqnarray}
So, we have:
\begin{equation}\label{17}
\mathrm{rank}\left(\frac{\partial(\frac{b}{a},\frac{c}{a})}{\partial(a,b,c)} \right)=\mathrm{rank}\left( \begin{array}{ccc}
   - b/a^2  & - c/a^2   \vspace*{1mm}\\
   1/a & 0  \vspace*{1mm}\\
   0 & 1/a \end{array}  \right)=2=q-\delta.
 \end{equation}
 Since $q=3$, then $\delta=1$. The equations corresponding  to the relation
(\ref{9}) are
 \begin{eqnarray}\label{18}
\begin{array}{lclcl}
\displaystyle \frac{b}{a}=f(t),&&\displaystyle\frac{c}{a}=g(t),
\end{array}
\end{eqnarray}
and the solutions corresponding to the equations (\ref{10}) and (\ref{11}) are
expressed as follows:
  \begin{eqnarray}\label{19}
\begin{array}{lclcl}
b=af(t),&&c=ag(t),&&a=a(x,t).
\end{array}
\end{eqnarray}
 Note that $a(x,t)$  is an arbitrary function.\\
 Now, we can  compute the derivatives of the functions $a,b$ and $c$ from relations (\ref{19}):
 \begin{eqnarray}\label{20}
\begin{array}{lclclclclclclc}
b_1=a_1f,&&\hspace{-2mm} b_2=a_2f+af',&&\hspace{-2mm} b_{11}=a_{11}f,&&\hspace{-2mm}b_{22}=a_{22}f+2a_2f'+af'',\\
 c_1=a_1g,&&\hspace{-2mm} c_2=a_2g+ag',&&\hspace{-2mm} c_{11}=a_{11}g,&&\hspace{-2mm}  c_{22}=a_{22}g+2a_2g'+ag'',\\
 b_{12}=a_{12}f+a_1f',&&\hspace{-2mm} c_{12}=a_{12}g+a_1g'.
\end{array}
\end{eqnarray}
Hence, by substituting the above relations into the system (\ref{2}), we
obtain:
\begin{eqnarray}\label{21}
\begin{array}{lclc}
a_{11}-a_{22}f-2a_2f'-af''=0,&&\hspace*{-65mm} a_{12}f+a_1f'+a_{11}g=0, \vspace*{1mm}\\
a_{12}+a_{22}g+2a_2g'+ag''=0,&&\hspace*{-65mm} a^2_2f+a_2af'-(a_2g+ag')^2+aa_{12}g+aa_{22}f=0,\vspace*{1mm}\\
a_1a_2f-a_1a_2g^2-2aa_1gg'-aa_{12}g^2-aa_{22}fg-2aa_2fg'-a^2fg''=0,\vspace*{1mm}\\
a^2_1f-2aa_1fg'-a^2_1g^2+aa_{11}f+3aa_1gf'+aa_{12}fg=0.\vspace*{1mm}
\end{array}
\end{eqnarray}
The consistency conditions, obtained from the system (\ref{21}),
conclude the following equations
 \begin{eqnarray}\label{22}
\begin{array}{lclc}
a^2f''+aa_2f'+a^2g'^2-a^2_2f+a^2_2g^2+2aa_2gg'=0,\hspace*{13mm} a_1=0,\vspace*{1mm}\\
aa_{22}f+a^2_2f-2aa_2gg'+aa_2f'-a^2{g'}^2-a^2_2g^2=0,\vspace*{1mm}\\
a^2fg''-aa_2f'g+a^2gg'^2-a^2_2fg+a^2_2g^3+2aa_2g'g^2+2aa_2fg'=0,\vspace*{1mm}
\end{array}
\end{eqnarray}
and these inequations
 \begin{eqnarray}
\begin{array}{lclclclc}\label{23}
f\neq 0,&& g\neq 0,&& a\neq 0,&& f-g^2\neq 0.
\end{array}
\end{eqnarray}
Equations (\ref{22}) form a system of ODEs. By solving this system, we
obtain four types of solutions as follows:
 \begin{equation}\label{24}
 \begin{array}{lclclclc}
1)\left \{ \begin{array}{lcr} f = c_3t+c_4\\ g = c_2  \\ a = c_1 \\ \end{array} \right.
 &&2)\left \{ \begin{array}{lcr} f = \frac{c_1c^2_3t+c_5}{c_1t+c_2}\vspace*{1mm}\\ g =c_3+\frac{c_1c_4}{c_1t+c_2} \vspace*{1mm} \\ a =c_1t+c_2 \\  \end{array} \right.\vspace*{1mm}
\\
3)\left \{ \begin{array}{lcr} f = \frac{c_5(t+c_2)}{\mathrm{ln}(t+c_2)-c_3c_1 } \vspace*{1mm}\\ g  = \frac{c_4}{\mathrm{ln}(t+c_2)-c_3c_1 } \vspace*{1mm} \\ a =- \frac{\mathrm{ln}(t+c_2)}{c_1 }+c_3 \\ \end{array} \right.
&& 4)\left \{ \begin{array}{lcr} f = \frac{c^2_6(t+c_2)}{(c_1\mathrm{ln}(t+c_2)+c_3t+c_4)c_3}\vspace*{1mm}\\ g =\frac{c_5+c_6t}{c_1\mathrm{ln}(t+c_2)+c_3t+c_4} \vspace*{1mm} \\ a =c_1\mathrm{ln}(t+c_2)+c_3t+c_4 \\  \end{array} \right.
\end{array}
\end{equation}

Now, by applying the relations (\ref{19}),  the partially invariant
solutions associated to the subalgebra $\mathcal{A}^7_1$  can be
obtained for the system (\ref{2}).
 \begin{equation}\label{25}
 \begin{array}{lclclclc}
1)\left \{ \begin{array}{lcr} a = c_1\\ b = c_1(c_3t+c_4)  \\ c=c_1c_2  \\ \end{array} \right.
&&2)\left \{ \begin{array}{lcr} a =c_1t+c_2 \\ b=c_1c^2_3t+c_5  \\ c =c_3(c_1t+c_2)+c_1c_4 \\  \end{array} \right.\vspace*{1mm}
\\
3)\left \{ \begin{array}{lcr} a =\displaystyle
\frac{\mathrm{-ln}(t+c_2)}{c_1 }+c_3 \vspace*{1mm}\\ b  =
c_5(t+c_2) \vspace*{1mm} \\ c=c_4 \\ \end{array} \right.
&&4)\left \{ \begin{array}{lcr} a
=c_1\mathrm{ln}(t+c_2)+c_3t+c_4 \vspace*{1mm}\\ b
=\displaystyle\frac{c^2_6(t+c_2)}{c_3} \vspace*{1mm} \\ c=c_5+c_6t \\
\end{array} \right.
\end{array}
\end{equation}
These partially invariant solutions seem to be trivial. In a similar
way, we can calculate nontrivial PISs by applying other
subalgebras listed in theorem (\ref{3.2}). Some other of these PISs are
presented in Table 1.
\section{Non-reducible partially invariant solutions}\vspace*{-1mm}
In this section, we will deal with those partially invariant
solutions which are not invariant with respect to some of the
subgroups of $G$. They are called non-reducible PISs and they are
usually rare. For a reducible partially invariant solution
$u=f(x)$, we can determine a subgroup $H'\subset H$ which  $u$ is
a $H'$-invariant solution and $\mathrm{dim}(H'\Gamma_f)\geqslant
\mathrm{dim}(H)-\delta=s-\delta$ (\cite{Ovs}, page 290). So,
reducible PISs can be obtained via the method of similarity
reduction from the reduced system involving
$p-\mathrm{dim}(H'\Gamma_f)\leqslant p+\delta-s$ independent
variables, which is indeed easier than obtaining them by PISs
method. For example, the PISs that we computed in the previous
section are non-reducible PISs.  Because,  reducible PISs  must be
invariant with respect to the one-parameter subgroups of its Lie
group. These subgroups have Lie subalgebras of $\mathcal{A}^7_1$
generated by an infinitesimal generator of the form
 $\alpha X_1+\beta X_7$ where $\alpha,\beta\in\mathbb{R}$.
In \cite{Nad}, we obtained the invariant solutions of the system
(\ref{2}) with respect to this generator by applying the method of
similarity reduction:
\begin{equation}\label{26}
1)\left \{ \begin{array}{lcr} a = 0\\ b = 0  \\ c=0 \\ \end{array} \right.
\hspace*{12mm}  2)\left \{ \begin{array}{lcr} a =0 \\ b=c_1e^{\frac{\beta}{\alpha}x}  \\ c =0 \\  \end{array} \right. \hspace*{12mm}
3)\left \{ \begin{array}{lcr} a = c_2e^{c_1t+\frac{\beta}{\alpha}x} \vspace*{1mm}\\ b=-\frac{c_2\beta}{c_1\alpha}e^{c_1t+\frac{\beta}{\alpha}x} \vspace*{1mm} \\ c=-\frac{c_1c_2\alpha}{\beta}e^{c_1t+\frac{\beta}{\alpha}x}  \\ \end{array} \right.
\end{equation}
Since, none of these solutions is similar to the PISs in (\ref{25}), we
conclude that PISs in (\ref{25}) are non-reducible PISs. Several other
non-reducible partially invariant solutions of the system (\ref{2}) are
listed in the following table.
\begin{center}
{\small \textbf{Table 1:} Non-reducible partially invariant solutions of (\ref{2}).}
\begin{small}
\begin{tabular}{lllllllll}
\cline{1-4}
Subalgebras & Invariants  & Dependent variables    & PISs    \\
\cline{1-4}
$\langle X_2,X_7\rangle$ & $\{x,\frac{b}{a},\frac{c}{a}\}$ & $\begin{array}{lcr} a = a(x,t)\vspace*{.7mm}\\ b = af(x) \vspace*{.7mm} \\ c=ag(x) \\ \end{array}$    & $\begin{array}{lcr} a = c_1x+c_2 \vspace*{.7mm} \\ b = c_1c^2_3x+(*)\vspace*{.7mm}  \\ c=c_3(c_1x+c_2)+c_1c_4 \\ \end{array}$  &\vspace*{3mm}\\

$\langle X_5,X_7\rangle$ & $\hspace*{-2mm}\begin{array}{lcr} \{t, \frac{c^2-ba}{b^2},\vspace*{1mm} \\
\frac{-ct+bx}{b}\}\end{array}$ & $\begin{array}{lcr} a = \frac{b(x-g(t))^2}{t^2}-bf(t)\vspace*{.7mm}\\ b = b(x,t) \vspace*{.7mm} \\ c=\frac{b}{t}(x-g(t)) \\ \end{array}$    & $\begin{array}{lcr} a = \frac{(c_1+c_2t^3)(x-c_3)^2}{t^3}-(**)\vspace*{.7mm}\\ b = \frac{c_1+c_2t^3}{t} \vspace*{.7mm} \\ c=\frac{(c_1+c_2t^3)(x-c_3)}{t^2} \\ \end{array}$  &\vspace*{3mm}\\

 $\langle X_2,X_6+X_7\rangle$ & $\{x,\frac{b}{a^3},\frac{c}{a^2}\}$ & $\begin{array}{lcr} a = a(x,t)\vspace*{.7mm}\\ b = a^3f(x)\vspace*{.7mm}  \\ c=a^2g(x) \\ \end{array}$    & $\begin{array}{lcr} a = \frac{4(t+c_1)}{(c_2x+c_3)^2}\vspace*{.7mm}\\ b =\frac{4c^2_2(t+c_1)^3}{(c_2x+c_3)^4} \vspace*{.7mm} \\ c=\frac{4c_2(t+c_1)^2}{(c_2x+c_3)^3} \\ \end{array}$  &\vspace*{3mm}\\

$\langle X_4,X_7\rangle$ &$\hspace*{-2mm}\begin{array}{lcr} \{x, \frac{-at+cx}{xa},\vspace*{1mm} \\
 \frac{at^2-2xtc+bx^2}{x^2a}\} \end{array}$   & $\begin{array}{lcr} a = a(x,t) \vspace*{.7mm}\\ b = a(g(x)+2\frac{t}{x}f(x)+\frac{t^2}{x^2}) \vspace*{.7mm} \\ c=a(f(x)+\frac{t}{x}) \\ \end{array}$    & $\begin{array}{lcr} a =\frac{c_1+c_2x^3}{x}\vspace*{.7mm} \\ b = \frac{(c_1+c_2x^3)(t+c_3)^2}{x^3}+(***) \vspace*{.7mm} \\ c=\frac{(c_1+c_2x^3)(t+c_3)}{x^2} \\ \end{array}$  &\vspace*{2mm}\\
\cline{1-4}
\end{tabular}
\end{small}
\end{center}
where $c_i$'s are arbitrary constants and
\begin{eqnarray*}
\begin{array}{lclc}
*=(\frac{c_5}{c_1}-c^2_3c_2)\mathrm{ln}(c_1x+c_2)+c_6 \vspace*{2mm},\\
**=t(c_4+c_5)(\mathrm{ln}(t+(\frac{c_1}{c_2})^{(\frac{1}{3})})^2-\mathrm{ln}(t^2-t(\frac{c_1}{c_2})^{(\frac{1}{3})}+(\frac{c_1}{c_2})^{(\frac{2}{3})})+2\sqrt{3}\mathrm{tan}^{-1}(\frac{2c_2t}{\sqrt{3}c_1}(\frac{c_1}{c_2})^{(\frac{2}{3})}-\frac{1}{\sqrt{3}})),\vspace*{2mm}\\
***=x(c_4+c_5)(\mathrm{ln}(x+(\frac{c_1}{c_2})^{(\frac{1}{3})})^2-\mathrm{ln}(x^2-x(\frac{c_1}{c_2})^{(\frac{1}{3})}+(\frac{c_1}{c_2})^{(\frac{2}{3})})+2\sqrt{3}\mathrm{tan}^{-1}(\frac{2c_2x}{\sqrt{3}c_1}(\frac{c_1}{c_2})^{(\frac{2}{3})}-\frac{1}{\sqrt{3}})).
\end{array}
\end{eqnarray*}
\section*{Conclusion}
Partially invariant solutions can be regarded as the natural
extension of the invariant ones and the method of obtaining them
is algorithmic. Some of the PISs are not invariant with respect to
the lower dimensional subalgebras. They are called non-reducible
PISs and  can be applied in order to obtain some new solutions for
an arbitrary system of PDEs. For example, in the present paper, we
have obtained a non-reducible PISs for the system (\ref{2}) as follows:
\begin{eqnarray}\label{27}
\begin{array}{lclclc}
a = \frac{4(t+c_1)}{(c_2x+c_3)^2},&& b
=\frac{4c^2_2(t+c_1)^3}{(c_2x+c_3)^4},&&
c=\frac{4c_2(t+c_1)^2}{(c_2x+c_3)^3},
\end{array}
\end{eqnarray}
Consequently, according to relation (\ref{1}), we could determine a general form
for the metric associated to a set of four-dimensional Einstein
Walker manifolds as follows:
\begin{eqnarray}\label{28}
\begin{array}{lclclc}
 g_{a,b,c}:=2(dx\circ dy+dt\circ  dz)+\displaystyle\frac{4(t+c_1)}{(c_2x+c_3)^2}dy\circ dy\vspace*{2mm}\\
 \hspace{13mm}+\displaystyle\frac{4c^2_2(t+c_1)^3}{(c_2x+c_3)^4}dz\circ dz+\displaystyle\frac{8c_2(t+c_1)^2}{(c_2x+c_3)^3}dy\circ dz.
\end{array}
\end{eqnarray}
This set of  solutions can not be obtained from the similarity
reduction method. In a similar manner, we can deduce some other
four-dimensional Einstein Walker manifolds from Table 1.

\section*{Acknowledgements}
It is a pleasure to thank the anonymous referees for their
constructive suggestions and helpful comments which have improved
the presentation of the paper. The authors wish to express
their sincere gratitude to Fatemeh Ahangari for her useful advise
and suggestions.







\end{document}